\newtheorem{thm}{Theorem}
\newtheorem{pro}{Proposition}
\theoremstyle{definition}
\newtheorem{rem}{Remark}
\renewcommand{\d}{\mathrm{d}}
\newcommand{\s}{\mathbf{s}}
\renewcommand{\t}{\mathbf{t}}
\renewcommand{\u}{\mathbf{u}}
\newcommand{\R}{\mathbb{R}}
\begin{document}
\begin{frontmatter}

\title{Fredholm representation of multiparameter Gaussian processes
with applications to equivalence in law and series expansions\tnotemark[t2]}

\author[a]{\inits{T.}\fnm{Tommi}\snm{Sottinen}\corref{cor1}}\email{tommi.sottinen@iki.fi}
\cortext[cor1]{Corresponding author.}
\author[b]{\inits{L.}\fnm{Lauri}\snm{Viitasaari}\fnref{f1}}\email{lauri.viitasaari@aalto.fi}
\fntext[f1]{Lauri Viitasaari was partially funded by Emil Aaltonen Foundation.}

\address[a]{Department of Mathematics and Statistics, University of
Vaasa, P.O. Box 700, FIN-65101 Vaasa, Finland}
\address[b]{Department of Mathematics and System Analysis, Aalto
University School of Science, P.O. Box 11100, FIN-00076 Aalto, Finland}

\tnotetext[t2]{\xch{The authors thank the}{We thank the} referees for their useful comments.}

\markboth{T. Sottinen, L. Viitasaari}{Fredholm representation}

\begin{abstract}
We show that every multiparameter Gaussian process with integrable
variance function admits a Wiener integral representation of Fredholm
type with respect to the Brownian sheet. The Fredholm kernel in the
representation can be constructed as the unique symmetric square root
of the covariance. We analyze the equivalence of multiparameter
Gaussian processes by using the Fredholm representation and show how to
construct series expansions for multiparameter Gaussian processes by
using the Fredholm kernel.
\end{abstract}

\begin{keyword}
Equivalence in law\sep
Gaussian sheets\sep
multiparameter Gaussian processes\sep
representation of Gaussian processes\sep
series expansions
\MSC[2010] 60G15\sep60G60
\end{keyword}

\received{25 June 2015}
\revised{21 September 2015}
\accepted{21 September 2015}
\publishedonline{2 October 2015}
\end{frontmatter}

\section{Introduction}

In this article, we consider multiparameter processes, that is, our
time is multidimensional. Throughout the paper, the dimension of time
$n\ge1$ is arbitrary but fixed.

We use the following notation throughout this article: $\t,\s,\u\in
\R^n$ are $n$-dimen\-sional multiparameters of time: $\t=(t_1,\ldots,
t_n)$, $\s=(s_1,\ldots,s_n)$, $\u=(u_1,\ldots,u_n$); $\mathbf{0}$
is an $n$-dimensional vector of $0$s, and $\mathbf{1}$ is an
$n$-dimensional vector of $1$s. We denote $\s\le\t$ if $s_k\le t_k$
for all $k\le n$. For $\s\le\t$, the set $[\s,\t]\subset\R^n$ is
the $n$-dimensional rectangle $\{ \u\in\R^n; \s\le\u\le\t\}$.

Let $X=(X_\t)_{\t\in[\mathbf{0},\mathbf{1}]}$ be a real-valued
centered Gaussian multiparameter process or field defined on some
complete probability space $(\varOmega,\mathcal{F},\mathbb{P})$. We
assume that the Gaussian field $X$ is \emph{separable}, that is, its
linear space, or the 1st chaos,
\[
\mathcal{H}_1 = \mathrm{cl}\,\, \bigl(\mathrm{span} \bigl\{
X_\t\, ;\, \t\in[\mathbf{0},\mathbf{1}] \bigr\} \bigr)
\]
is separable. Here $\mathrm{cl}$ means closure in $L^2(\varOmega
,\mathcal{F},\mathbb{P})$.

Our main result, Theorem~\ref{thm:fredholm}, shows when the Gaussian
field $X$ can be represented in terms of the Brownian sheet. Recall
that the Brownian sheet $W=(W_\t)_{\t\in[\mathbf{0},\mathbf{1}]}$
is the centered Gaussian field with the covariance
\[
\mathbb{E}\! [ W_\t W_\s ] = \prod
_{k=1}^n \min(t_k,s_k).
\]
The Brownian sheet can also be considered as the \emph{Gaussian white
noise} on $[\mathbf{0},\mathbf{1}]$ with the Lebesgue control
measure. This means that $\d W$ is a random measure on $ ([\mathbf
{0},\mathbf{1}],\mathcal{B}([\mathbf{0},\mathbf{1}]),\mathrm
{Leb}([\mathbf{0},\mathbf{1}]) )$ characterized by the
following properties:
\begin{enumerate}
\item$\int_A \d W_\t\sim\mathcal{N}(0,\mathrm{Leb}(A))$,
\item$\int_A \d W_\t$ and $\int_B \d W_\s$ are independent if
$A\cap B = \varnothing$.
\end{enumerate}
Then, if $f,g:[\mathbf{0},\mathbf{1}] \to\mathbb{R}$ are simple
functions, then we have the \emph{Wiener--It\^o isometry}
\begin{equation}
\label{eq:isometry} \mathbb{E}\! \biggl[\int_{[\mathbf{0},\mathbf{1}]} f(\t)\, \d
W_\t \int_{[\mathbf{0},\mathbf{1}]} g(\s)\, \d W_\s
\biggr] = \int_{[\mathbf{0},\mathbf{1}]} f(\t)g(\t)\, \d\t.
\end{equation}
Consequently, the integral $\int_{[\mathbf{0},\mathbf{1}]} f(\t)\,\d
W_\t$ can be extended for all $f\in L^2([\mathbf{0},\mathbf{1}])$ by
using the isometry \eqref{eq:isometry}, and the isometry \eqref
{eq:isometry} will also hold for this extended integral.

In this article, we show the Fredholm representation for Gaussian
fields satisfying the trace condition \eqref{eq:trace} in Section~\ref
{sect:fredholm}, Theorem~\ref{thm:fredholm}. In Section~\ref
{sect:equivalence}, we apply the Fredholm representation to give a
representation for Gaussian fields that are equivalent in law, and in
Section~\ref{sect:series}, we show how to generate series expansions
for Gaussian fields by using the Fredholm representation. The Fredholm
representation of Theorem~\ref{thm:fredholm} can also be used to
provide a \emph{transfer principle} that builds stochastic analysis
and Malliavin calculus for Gaussian fields from the corresponding
well-known theory for the Brownian sheet. We do not do that in this
article, although it would be quite straightforward given the results
for the one-dimensional case provided in \cite
{Sottinen-Viitasaari-2014-preprint}.

\section{Fredholm representation}\label{sect:fredholm}

Recall that $X$ is a separable centered Gaussian field with covariance
function~$R$ and $W$ is a Brownian sheet. Suppose that $X$ is defined
on a complete probability space $(\varOmega,\mathcal{F},\mathbb{P})$
that is rich enough to carry Brownian sheets.

The following theorem states that the field $X$ can be realized as a
Wiener integral with respect to a Brownian sheet.
Let us note that it is not always possible to construct the Brownian
sheet $W$ directly from the field $X$. Indeed, consider the trivial
field \mbox{$X\equiv0$} to see this. As a consequence, the Karhunen
representation theorem (see, e.g., \cite
[Thm.~41]{Berlinet-Agnan-2004}) cannot be applied here. Consequently,
the Brownian sheet in representation \eqref{eq:fredholm} is not
guaranteed to exist on the same probability space with $X$.

In any case, representation \eqref{eq:fredholm} holds in law.
This means that for a given Brownian sheet $W$, the field given by
\eqref{eq:fredholm} is a Gaussian field with the same law as $X$.

\begin{thm}[Fredholm representation]\label{thm:fredholm}
Let $(\varOmega,\mathcal{F},\mathbb{P})$ be a probability space such
that $\sigma\{\xi_k;k\in\mathbb{N}\}\subset\mathcal{F}$, where
$\xi_k$, $k\in\mathbb{N}$, are i.i.d.\ standard normal random variables.
Let $X$ be a separable centered Gaussian field defined on $(\varOmega
,\mathcal{F},\mathbb{P})$.
Let $R$ be the covariance of $X$.

Then there exist a kernel $K\in L^2([\mathbf{0},\mathbf{1}])$ and a
Brownian sheet $W$, possibly, defined on a larger probability space,
such that the representation
\begin{equation}
\label{eq:fredholm} X_\t= \int_{[\mathbf{0},\mathbf{1}]} K(\t,\s)\, \d
W_\s
\end{equation}
holds \emph{if and only if} $R$ satisfies the trace condition
\begin{equation}
\label{eq:trace} \int_{[\mathbf{0},\mathbf{1}]} R(\t,\t)\, \d\t< \infty.
\end{equation}
\end{thm}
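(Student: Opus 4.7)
The plan is to prove the two directions separately. The ``only if'' direction is a direct computation using the Wiener--It\^o isometry, while the ``if'' direction relies on the spectral theory of trace-class positive operators on $L^2([\mathbf{0},\mathbf{1}])$ together with an explicit construction of the square root kernel.

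For necessity, suppose $X_\t = \int_{[\mathbf{0},\mathbf{1}]} K(\t,\s)\,\d W_\s$ with $K\in L^2([\mathbf{0},\mathbf{1}]^2)$. Applying \eqref{eq:isometry} to the sections $K(\t,\cdot)$ and $K(\s,\cdot)$ yields $R(\t,\s)=\int_{[\mathbf{0},\mathbf{1}]} K(\t,\u)K(\s,\u)\,\d\u$, so in particular $R(\t,\t)=\int K(\t,\u)^2\,\d\u$. Tonelli then gives $\int_{[\mathbf{0},\mathbf{1}]} R(\t,\t)\,\d\t = \|K\|^2_{L^2([\mathbf{0},\mathbf{1}]^2)} < \infty$, which is \eqref{eq:trace}.

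For sufficiency, observe that $R$ is symmetric and positive semidefinite as a covariance, and separability of $X$ makes $R$ jointly measurable. The trace condition \eqref{eq:trace} says that the integral operator $\mathbf{R}f(\t)=\int_{[\mathbf{0},\mathbf{1}]} R(\t,\s)f(\s)\,\d\s$ on $L^2([\mathbf{0},\mathbf{1}])$ is positive, self-adjoint and trace class (hence Hilbert--Schmidt and compact). By the spectral theorem there exist nonnegative eigenvalues $\lambda_k$ with $\sum_k \lambda_k = \int R(\t,\t)\,\d\t$ and an orthonormal sequence $(\phi_k)$ in $L^2([\mathbf{0},\mathbf{1}])$ such that $R(\t,\s)=\sum_k \lambda_k\,\phi_k(\t)\phi_k(\s)$, the series converging in $L^2([\mathbf{0},\mathbf{1}]^2)$ (a Mercer-type representation).

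I would then define the unique symmetric square root $K(\t,\s):=\sum_k \sqrt{\lambda_k}\,\phi_k(\t)\phi_k(\s)$, for which $\|K\|^2_{L^2([\mathbf{0},\mathbf{1}]^2)}=\sum_k \lambda_k<\infty$, so $K\in L^2$. Using the iid standard normals $\xi_k\in\mathcal{F}$, construct a Brownian sheet $W$ on $\varOmega$ by fixing an orthonormal basis $(\psi_k)$ of $L^2([\mathbf{0},\mathbf{1}])$ and setting $W_\t:=\sum_k \langle\mathbf{1}_{[\mathbf{0},\t]},\psi_k\rangle\,\xi_k$; the induced Wiener integral satisfies \eqref{eq:isometry} by construction. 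Define $Y_\t:=\int_{[\mathbf{0},\mathbf{1}]} K(\t,\s)\,\d W_\s$; then $\mathbb{E}[Y_\t Y_\s]=\int K(\t,\u)K(\s,\u)\,\d\u = R(\t,\s)$, so $Y$ and $X$ are centered Gaussian fields with the same covariance and hence the same law, which is exactly the content of \eqref{eq:fredholm}. The main obstacle is the operator-theoretic step: in the absence of any continuity assumption on $R$, one must verify that \eqref{eq:trace} alone forces $\mathbf{R}$ to be trace class, giving the Mercer-type expansion on which the square root construction and its $L^2$-convergence depend; once this is in place the rest is a routine application of the Wiener--It\^o isometry.
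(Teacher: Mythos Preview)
Your argument establishes only that the two sides of \eqref{eq:fredholm} agree \emph{in law}, not as random variables in $L^2(\varOmega,\mathcal{F},\mathbb{P})$. You build $W$ from the given i.i.d.\ normals $\xi_k$, which have no a~priori connection to $X$; the field $Y_\t=\int K(\t,\s)\,\d W_\s$ then has the right covariance, but there is no reason for $Y_\t$ to equal $X_\t$. The theorem, however, asserts the existence of a Brownian sheet $W$ (possibly on an enlarged space) for which the identity $X_\t=\int K(\t,\s)\,\d W_\s$ holds in $L^2(\varOmega)$, not merely in distribution.

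The paper closes this gap with an extra construction: once $K$ is in hand, it pushes an orthonormal basis $(\tilde\phi_k)$ of $L^2([\mathbf{0},\mathbf{1}])$ through $K$ to obtain an orthonormal system $(\phi_k)$ in the RKHS of $X$, applies the canonical isometry $\varTheta$ from the RKHS into $L^2(\varOmega,\sigma(X),\mathbb{P})$ to produce i.i.d.\ standard normals $\xi_k=\varTheta(\phi_k)$ \emph{built from $X$ itself}, augments by independent normals if the system is finite, and sets $W_\t=\sum_k\int_{[\mathbf{0},\t]}\tilde\phi_k(\s)\,\d\s\,\xi_k$. A direct computation then gives $\int K(\t,\s)\,\d W_\s=\sum_k\phi_k(\t)\xi_k=X_\t$ in $L^2(\varOmega)$. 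Your necessity argument and your square-root construction of $K$ are fine and match the paper's route (the paper is content with Hilbert--Schmidt rather than trace class, but your observation that \eqref{eq:trace} actually yields trace class is correct); what is missing is precisely this RKHS-based coupling of $W$ to $X$.
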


\begin{proof}
From condition \eqref{eq:trace} it follows that the covariance operator
\[
\mathrm{R} f(\t) = \int_{[\mathbf{0},\mathbf{1}]} f(\s) R(\t,\s )\, \d\s
\]
is Hilbert--Schmidt. Indeed, the Hilbert--Schmidt norm of the operator
$\mathrm{R}$ satisfies, by the Cauchy--Schwarz inequality,
\begin{align*}
{\|\mathrm{R}\|}_{\mathrm{HS}} &= \sqrt{\int_{[\mathbf{0},\mathbf{1}]}\int
_{[\mathbf{0},\mathbf
{1}]} R(\t,\s)^2\, \d\t\,\d\s}
\\
&\le \sqrt{\int_{[\mathbf{0},\mathbf{1}]}\int_{[\mathbf{0},\mathbf
{1}]} R(\t,\t)
R(\s,\s)\, \d\t\,\d\s}
\\
&= \int_{[\mathbf{0},\mathbf{1}]} R(\t,\t)\, \d\t.
\end{align*}
Since Hilbert--Schmidt operators are compact operators, it follows
from, for example, \cite[p. 233]{Riesz-SzNagy-1955} that the operator
$\mathrm{R}$ admits the eigenfunction representation
\begin{equation}
\label{eq:eig-rep} \mathrm{R} f(\t) = \sum_{k=1}^\infty
\lambda_k \int_{[\mathbf{0},\mathbf{1}]} f(\s) \phi_k(\s)\,\d\s\,\, \phi_k(\t).
\end{equation}
Here $(\phi_k)_{k=1}^\infty$, the eigenfunctions of $\mathrm{R}$,
form an orthonormal system on $L^2([\mathbf{0},\mathbf{1}])$. In
particular, this means that
\begin{equation}
\label{eq:eig-kernel} R(\t,\s) = \sum_{k=1}^\infty
\lambda_k \,\phi_k(\t)\phi_k(\s).
\end{equation}
From this it follows that the square root of the covariance operator
$\mathrm{R}$ admits a kernel $K$ if and only if
\begin{equation}
\label{eq:trace-lambda} \sum_{k=1}^\infty
\lambda_k < \infty.
\end{equation}
Note that condition \eqref{eq:trace-lambda} is equivalent to condition
\eqref{eq:trace}. Consequently, we can define
\begin{equation}
\label{eq:K-series} K(\t,\s) = \sum_{k=1}^\infty
\sqrt{\lambda_k}\,\phi_k(\t)\phi _k(\s)
\end{equation}
since the series in the right-hand side of \eqref{eq:K-series}
converges in $L^2([\mathbf{0},\mathbf{1}])$, and the eigenvalues
$ (\lambda_k )_{k=1}^\infty$ of a positive-definite
operator $\mathrm{R}$ are nonnegative.

Now,
\begin{align*}
R(\t,\s) &= \sum_{k=1}^\infty
\lambda_k \,\phi_k(\t)\phi_k(\s)
\\
&= \sum_{k=1}^\infty\sum
_{\ell=1}^\infty\sqrt{\lambda_k}\sqrt {
\lambda_\ell} \phi_k(\t)\phi_\ell(\s) \int
_{[\mathbf
{0},\mathbf{1}]} \phi_k(\u)\phi_\ell(\u)\, \d\u
\\
&= \int_{[\mathbf{0},\mathbf{1}]} \Biggl(\sum_{k=1}^\infty
\sqrt {\lambda_k}\phi_k(\t)\phi_k(\u) \, \sum
_{\ell=1}^\infty\sqrt{\lambda_{\ell}}
\phi_{\ell}(\s)\phi _{\ell}(\u) \Biggr) \d\u
\\
&= \int_{[\mathbf{0},\mathbf{1}]} K(\t,\u)K(\s,\u)\, \d\u,
\end{align*}
where the interchange of summation and integration is justified by the
fact that series \eqref{eq:K-series} converges in $L^2([\mathbf{
0},\mathbf{1}])$.
From this calculation and from the Wiener--It\^o isometry \eqref
{eq:isometry} of the integrals with respect to the Brownian sheet it
follows that the centered Gaussian processes on the left-hand side and
the right-hand side of Eq.~\eqref{eq:fredholm} have the same
covariance function. Consequently, since they are Gaussian fields, they
have the same law. This means that representation~\eqref{eq:fredholm}
holds in law.

Finally, we need to construct a Brownian sheet $W$ associated with the
field~$X$ such that representation \eqref{eq:fredholm} holds in
$L^2(\varOmega,\mathcal{F},\mathbb{P})$.
Let $(\tilde\phi_k)_{k=1}^\infty$ be any orthonormal basis on
$L^2([\mathbf{0},\mathbf{1}])$. Set
\[
\phi_k(\t) = \int_{[\mathbf{0},\mathbf{1}]} \tilde\phi_k(
\s) K(\t,\s)\, \d\s.
\]
Then $(\phi_k)_{k=1}^\infty$ is an orthonormal basis (possibly finite
or even empty!) on the reproducing kernel Hilbert space (RKHS) of the
Gaussian field $X$ (see further for a definition). Let $\varTheta$ be an
isometry from the RKHS to $L^2(\varOmega,\sigma(X),\mathbb{P})$. Set
$\xi_k = \varTheta(\phi_k)$. Then $\xi_k$ are i.i.d. standard normal
random variables, and by the reproducing property we have that
\[
X_\t= \sum_{k=1}^\infty
\phi_k(\t)\, \xi_k
\]
in $L^2(\varOmega,\mathcal{F},\mathbb{P})$. Now, it may be that there
are only finitely many $\xi_k$ developed this way. If this is the
case, then we augment the finite sequence $(\xi_k)_{k=1}^n$ with
independent standard normal random variables. Then set
\[
W_\t= \sum_{k=1}^\infty\int
_{[\mathbf{0},\t]}\tilde\phi_k(\s )\, \d\s\,\,
\xi_k.
\]
For this Brownian sheet, representation \eqref{eq:fredholm} holds in
$L^2(\varOmega,\mathcal{F},\mathbb{P})$. Indeed,
\begin{align*}
\int_{[\mathbf{0},\mathbf{1}]} K(\t,\s)\, \d W_\s &= \int
_{[\mathbf{0},\mathbf{1}]} K(\t,\s)\, \d\sum_{k=1}^\infty
\int_{[\mathbf{0},\t]}\tilde\phi_k(\s)\, \d\s\,\,
\xi_k
\\
&= \sum_{k=1}^\infty\int_{[\mathbf{0},\mathbf{1}]}
K(\t,\s)\tilde \phi_k(\s)\, \d\s\,\, \xi_k
\\
&= \sum_{k=1}^\infty\phi_k(\t)\,
\xi_k
\\
&= X_\t.
\end{align*}
Here the change of summation, differentiation, and integration is
justified by the fact that the everything is square integrable.
\end{proof}

\begin{rem}
\begin{enumerate}
\item
The eigenfunction expansion \eqref{eq:eig-kernel} for the kernel $(\t
,\s)\mapsto K(\t,\s)$ is symmetric in $\t$ and $\s$. Consequently,
it is always possible to have a symmetric kernel in representation
\eqref{eq:fredholm}, that is, in principle it is always possible to
transfer from a \emph{given} representation
\[
X_\t= \int_{[\mathbf{0},\mathbf{1}]} K(\t,\s)\, \d W_\s
\]
to
\[
X_\t= \int_{[\mathbf{0},\mathbf{1}]} \tilde K(\t,\s)\, \d\tilde
W_\s
\]
where $\tilde W$ is some other Brownian sheet, and the kernel $\tilde
K$ is symmetric. Unfortunately, for a given kernel $K$ and Brownian
sheet $W$, the authors do not know how to do this analytically.

\item
In general, it is not possible to choose a Volterra kernel $K$ in
\eqref{eq:fredholm}. By a~Volterra kernel we mean a kernel that
satisfies $K(\t,\s) =0$ if $s_k>t_k$ for some $k$.
To see why a Volterra representation is not always possible, consider
the following simple counterexample: $X_\t\equiv\xi$, where $\xi$
is a standard normal random variable. This field cannot have a Volterra
representation since Volterra fields vanish in the origin. A Fredholm
representation for this field is simply $X_\t= \int_{[\mathbf
{0},\mathbf{1}]} \d W_\s$ {\rm(}with suitable Brownian sheet $W$
depending on $\xi)$.

For a more complicated counterexample \textup{(}with $X_0=0)$ see
\textup{\cite[Example~3.2]{Sottinen-Viitasaari-2014-preprint}}.

Consequently, in general, it is not possible to generate \xch{a Gaussian}{za Gaussian}
field~$X$ on the rectangle $[\mathbf{0},\t]$ from the noise $W$ on
the same rectangle $[\mathbf{0},\t]$. Instead, the whole information
on the cube $[\mathbf{0},\mathbf{1}]$ may be needed.

\item
If the family $\{ K(\t,\,\cdot\,)\, ; \t\in[\mathbf{0},\mathbf
{1}]\}$ is total in $L^2([\mathbf{0},\mathbf{1}])$, then a Brownian
sheet in representation \eqref{eq:fredholm} exists on the same
probability space $(\varOmega,\mathcal F,\mathbb P)$. Moreover, in this
case, it can be constructed from the Gaussian field $X$. Indeed, in
this case, we can apply the Karhunen representation theorem \textup
{\cite[{Thm.~41}]{Berlinet-Agnan-2004}}.
\end{enumerate}
\end{rem}

The \emph{reproducing kernel Hilbert space} (RKHS) of the Gaussian
field $X$ is the Hilbert space $\mathcal{H}$ that is isometric to the
linear space $\mathcal{H}_1$, and the defining isometry is $R(\t
,\cdot)\mapsto X_\t$. In other words, the RKHS is the Hilbert space
of functions over $[\mathbf{0},\mathbf{1}]$ extended and closed
linearly by the relation
\[
{ \bigl\langle R(\t,\cdot),R(\s,\cdot) \bigr\rangle}_{\mathcal
{H}} = R(\t,\s).
\]
The RKHS is of paramount importance in the analysis of Gaussian
processes. In this respect, the Fredholm representation \eqref
{eq:fredholm} is also very important. Indeed, if the kernel $K$ of
Theorem~\ref{thm:fredholm} is known, then the RKHS is also known as
the following reformulation of Lifshits \cite
[Prop.~4.1]{Lifshits-2012} states.

\begin{pro}\label{pro:RKHS}
Let $X$ admit representation \eqref{eq:fredholm}. Then
\[
\mathcal{H} = \biggl\{ f\,;\, f(\t) = \int_{[\mathbf{0},\mathbf
{1}]} \tilde f(
\s)K(\t,\s)\,\d\s, \tilde f\in L^2\bigl([\mathbf {0},\mathbf{1}]\bigr)
\biggr\}.
\]
Moreover, the inner product in $\mathcal{H}$ is given by
\[
{ \langle f, g \rangle}_{\mathcal{H}} = \inf_{\tilde
f,\tilde g}\, \int
_{[\mathbf{0},\mathbf{1}]} \tilde f(\t) \tilde g(\t)\, \d\t,
\]
where the infimum is taken over all such $\tilde f$ and $\tilde g$ that
\begin{align*}
f(\t) &= \int_{[\mathbf{0},\mathbf{1}]} \tilde f(\s)K(\t,\s)\, \d\s,
\\
g(\t) &= \int_{[\mathbf{0},\mathbf{1}]} \tilde g(\t)K(\t,\s)\, \d\s.
\end{align*}
\end{pro}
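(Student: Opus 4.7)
The plan is to follow the standard factorization argument for the RKHS of a Gaussian process represented by a kernel against white noise. Introduce the linear operator $J\colon L^2([\mathbf{0},\mathbf{1}])\to \R^{[\mathbf{0},\mathbf{1}]}$ by
\[
J(\tilde f)(\t)=\int_{[\mathbf{0},\mathbf{1}]}\tilde f(\s)\,K(\t,\s)\,\d\s.
\]
The set the proposition identifies with $\mathcal H$ is precisely the range of $J$, and the formula for the inner product is obtained by minimizing the $L^2$-norm of preimages. The first thing to observe is that $R(\t,\cdot)$ is itself in the range of $J$: the factorization $R(\t,\s)=\int_{[\mathbf{0},\mathbf{1}]}K(\t,\u)K(\s,\u)\,\d\u$ produced in the proof of Theorem~\ref{thm:fredholm} says exactly that $R(\t,\cdot)=J(K(\t,\cdot))$. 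Hence every reproducing kernel element lies in $\mathrm{range}(J)$.

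Next I set up the basic isometry. Let $V:=\overline{\mathrm{span}}\{K(\t,\cdot);\t\in[\mathbf{0},\mathbf{1}]\}\subset L^2([\mathbf{0},\mathbf{1}])$. The Wiener--It\^o isometry \eqref{eq:isometry} together with the Fredholm representation \eqref{eq:fredholm} gives an isometric embedding $I\colon L^2([\mathbf{0},\mathbf{1}])\to L^2(\varOmega)$ satisfying $I(K(\t,\cdot))=X_\t$; restricting to $V$ yields an isometric bijection $V\to\mathcal H_1$. Composing with the canonical RKHS isometry $\mathcal H_1\to\mathcal H$ that sends $X_\t\mapsto R(\t,\cdot)$ produces a linear isometry $V\to\mathcal H$ sending $K(\t,\cdot)\mapsto R(\t,\cdot)=J(K(\t,\cdot))$. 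Since this isometry agrees with $J$ on the total family $\{K(\t,\cdot)\}$ and both maps are continuous and linear, it coincides with $J|_V$. Consequently $J|_V\colon V\to\mathcal H$ is a surjective isometry.

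It remains to pass from $V$ to all of $L^2$ and derive the variational formula. The kernel of $J$ is the annihilator $V^\perp$, so any preimage $\tilde f\in L^2$ of $f\in\mathcal H$ decomposes uniquely as $\tilde f=\tilde f_V+\tilde f^\perp$ with $\tilde f_V\in V$ the unique preimage in $V$ and $\tilde f^\perp\in V^\perp$. This gives $\mathrm{range}(J)=J(V)=\mathcal H$ as sets, proving the first assertion. Moreover $\|\tilde f\|_{L^2}^2=\|\tilde f_V\|_{L^2}^2+\|\tilde f^\perp\|_{L^2}^2\ge\|\tilde f_V\|_{L^2}^2=\|f\|_{\mathcal H}^2$, with equality iff $\tilde f=\tilde f_V$, so the infimum of the $L^2$-norm of preimages of $f$ equals $\|f\|_{\mathcal H}$; the bilinear inner product statement follows by applying the same decomposition to $\tilde f$ and $\tilde g$ and noting that the infimum is attained at $(\tilde f_V,\tilde g_V)$, for which $\int \tilde f_V\tilde g_V\,\d\t=\langle f,g\rangle_{\mathcal H}$ by the isometry $J|_V$.

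The main obstacle is identifying $J|_V$ as an isometry onto $\mathcal H$: one has to check that the $L^2$ inner product on the generators $K(\t,\cdot)$ matches the RKHS inner product on $R(\t,\cdot)$. This is precisely the identity $\langle K(\t,\cdot),K(\s,\cdot)\rangle_{L^2}=R(\t,\s)$ furnished by the Fredholm representation, after which the rest is a routine extension-by-continuity and orthogonal decomposition argument.
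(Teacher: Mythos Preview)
The paper does not give its own proof of this proposition; it merely states it as a reformulation of Lifshits \cite[Prop.~4.1]{Lifshits-2012} and refers the reader there. Your factorization argument is the standard one and is correct for the identification of $\mathcal H$ with the range of $J$ and for the norm formula $\|f\|_{\mathcal H}^2=\inf_{\tilde f}\|\tilde f\|_{L^2}^2$; it is essentially the proof one would find behind the cited reference.

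One point to tighten: your last sentence about the \emph{bilinear} inner product is imprecise. If both $\tilde f$ and $\tilde g$ vary independently over all preimages, then $\int\tilde f\tilde g\,\d\t=\langle\tilde f_V,\tilde g_V\rangle_{L^2}+\langle\tilde f^{\perp},\tilde g^{\perp}\rangle_{L^2}$, and the second term can be made arbitrarily negative, so the literal infimum is $-\infty$. What your decomposition actually proves is $\langle f,g\rangle_{\mathcal H}=\langle\tilde f_V,\tilde g_V\rangle_{L^2}$, i.e.\ the $L^2$ inner product of the \emph{minimum-norm} preimages; the inner-product statement in the proposition should be read in this sense (equivalently, obtained from the norm identity by polarization). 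This is a wrinkle in the proposition's phrasing rather than a flaw in your strategy.
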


\section{Application to equivalence in law}\label{sect:equivalence}

Two random objects $\xi$ and $\zeta$ are \emph{equivalent in law}
if, their distributions satisfy $\mathbb{P}[\xi\in B]>0$ if and only
if $\mathbb{P}[\zeta\in B]>0$ for all measurable sets $B$. On the
contrary, the random objects $\xi$ and $\zeta$ are \emph{singular in
law} if there exists a measurable set $B$ such that $\mathbb{P}[\xi
\in B]=1$ but $\mathbb{P}[\zeta\in B]=0$. For \emph{centered}
Gaussian random objects there is the well-known dichotomy that two
centered Gaussian objects are either equivalent or singular in law; see
\cite[Thm.~6.1]{Hida-Hitsuda-1993}.

There is a complete characterization of the equivalence by any two
Gaussian processes due to Kallianpur and Oodaira; see \cite
[Thms.~9.2.1 and 9.2.2]{Kallianpur-1980}. It is possible to extend this
to Gaussian fields and formulate it in terms of the operator $\mathrm
{K}$. The result would remain quite abstract, though. Therefore, we due
not pursue in that direction. Instead, the following Proposition~\ref
{pro:equivalence} gives a~partial solution to the problem what do
Gaussian fields equivalent to a given Gaussian field $X$ look like.
Proposition~\ref{pro:equivalence} uses only the Hitsuda representation
theorem, which is, unlike the Kallianpur--Oodaira theorem, quite concrete.

Let $\tilde X=(\tilde X_\t)_{\t\in[\mathbf{0},\mathbf{1}]}$ be a
centered Gaussian field with covariance function~$\tilde R$, and let
$X=(X_\t)_{\t\in[\mathbf{0},\mathbf{1}]}$ be a centered Gaussian
field with covariance function $R$.

\begin{pro}[Representation of equivalent Gaussian fields]\label
{pro:equivalence}
Suppose that $X$ has representation \eqref{eq:fredholm} with kernel
$K$ and Brownian sheet $W$. If
\begin{equation}
\label{eq:equivalence} \tilde X_\t= \int_{[\mathbf{0},\mathbf{1}]} K(\t,\s)\,
\d W_\s- \int_{[\mathbf{0},\mathbf{1}]}\int_{[\mathbf{s},\mathbf{1}]} K(
\t ,\s)L(\s,\u)\,\d W_\u\,\d\s
\end{equation}
for some $L\in L^2([\mathbf{0},\mathbf{1}])$, then $\tilde X$ is
equivalent in law to $X$.
\end{pro}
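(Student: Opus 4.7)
The plan is to reduce the problem to the Hitsuda representation theorem for the Brownian sheet. Concretely, given the kernel $L$, I would introduce the auxiliary field
\[
\tilde W_\t = W_\t - \int_{[\mathbf{0},\t]}\int_{[\mathbf{s},\mathbf{1}]} L(\s,\u)\,\d W_\u\,\d\s,
\]
and then try to show two things: (i) $\tilde W$ is itself a Brownian sheet whose law is equivalent to the law of $W$; and (ii) the field $\tilde X$ defined by \eqref{eq:equivalence} is exactly the Fredholm image of $\tilde W$ under the kernel $K$, that is, $\tilde X_\t = \int_{[\mathbf{0},\mathbf{1}]} K(\t,\s)\,\d\tilde W_\s$. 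Once these are in place, the conclusion is immediate: since $\tilde W \stackrel{\text{law}}{\sim} W$ up to equivalence of measures, and $X$ and $\tilde X$ are obtained from $W$ and $\tilde W$ respectively by one and the same deterministic measurable map $w\mapsto \int K(\cdot,\s)\,\d w_\s$ on the Wiener space of the sheet, the push-forward measures on path space are equivalent, which is precisely equivalence in law of $X$ and $\tilde X$.

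Step (i) is where the work lies, and I would invoke the Hitsuda representation theorem for the Brownian sheet (the multi-parameter analogue of Hida--Hitsuda): a Gaussian field is a Brownian sheet equivalent in law to $W$ if and only if it differs from $W$ by a Volterra drift of the above form with $L\in L^2$. This is the precise point where the hypothesis $L\in L^2([\mathbf{0},\mathbf{1}])$ is used, and it supplies not only the Gaussianity and the covariance of $\tilde W$ but also the equivalence $\mathbb P\circ\tilde W^{-1}\sim\mathbb P\circ W^{-1}$. Referencing this theorem (as the proposition already announces) is what makes step~(i) routine rather than an obstacle.

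Step (ii) is then a stochastic Fubini calculation. Formally, writing the semimartingale-type relation $\d\tilde W_\s = \d W_\s - \bigl(\int_{[\s,\mathbf{1}]} L(\s,\u)\,\d W_\u\bigr)\d\s$ and integrating $K(\t,\s)$ against it,
\[
\int_{[\mathbf{0},\mathbf{1}]} K(\t,\s)\,\d\tilde W_\s
 = \int_{[\mathbf{0},\mathbf{1}]} K(\t,\s)\,\d W_\s
 - \int_{[\mathbf{0},\mathbf{1}]} K(\t,\s)\int_{[\s,\mathbf{1}]} L(\s,\u)\,\d W_\u\,\d\s,
\]
which is \eqref{eq:equivalence}. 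The interchange of the deterministic integral in $\s$ and the Wiener integral in $\u$ is justified by a stochastic Fubini argument, whose hypotheses reduce to the joint square-integrability of $(\s,\u)\mapsto K(\t,\s)L(\s,\u)\mathbf 1_{\{\s\le\u\}}$ on $[\mathbf{0},\mathbf{1}]^2$, which follows from $K(\t,\cdot),L\in L^2$ by Cauchy--Schwarz.

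The main obstacle, therefore, is just the legitimacy of the multi-parameter Hitsuda representation used in step (i); once it is granted, the proof is a short bookkeeping exercise with Wiener integrals combining step (ii) with the observation that the law-preserving/equivalence-preserving map is the deterministic Fredholm transform $w\mapsto\int K(\cdot,\s)\,\d w_\s$.
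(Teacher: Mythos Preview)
Your approach is exactly the paper's: quote the multiparameter Hitsuda representation for the Brownian sheet to obtain a sheet $\tilde W$ equivalent in law to $W$, then recognise $\tilde X=\int_{[\mathbf 0,\mathbf 1]}K(\t,\s)\,\d\tilde W_\s$ and push the equivalence forward through the deterministic Fredholm map. The paper's proof is actually terser than yours; it does not spell out the stochastic Fubini step or the push-forward argument.

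One bookkeeping caution: the Hitsuda form the paper invokes has the inner integral over $[\mathbf 0,\s]$ with a \emph{Volterra} kernel $L$, i.e.\ $\tilde W_\t = W_\t - \int_{[\mathbf 0,\t]}\int_{[\mathbf 0,\s]} L(\s,\u)\,\d W_\u\,\d\s$, whereas you (following the displayed statement of the proposition) put the inner limits as $[\s,\mathbf 1]$. With those anticipative limits your $\tilde W$ is not in Hitsuda form, so step~(i) would not follow as written; with the Volterra limits $[\mathbf 0,\s]$, the Fubini in step~(ii) produces the matching expression after swapping the order of integration. This is a quirk of the limits as printed rather than a flaw in your strategy.
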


\begin{proof}
By \cite[Prop.~4.2]{Sottinen-Tudor-2006} we have the following
multiparameter version of the Hitsuda representation theorem: A
centered Gaussian field $\tilde W=(\tilde W_\t)_{\t\in[\mathbf
{0},\mathbf{1}]}$ is equivalent in law to a Brownian sheet if and only
if it admits the representation
\begin{equation}
\label{eq:hitsuda} \tilde W_\t= W_\t- \int
_{[\mathbf{0},\mathbf{t}]}\int_{[\mathbf
{0},\mathbf{s}]} L(\s,\u)\, \d
W_\u\,\d\s
\end{equation}
for some Volterra kernel $L\in L^2([\mathbf{0},\mathbf{1}])$.

Let then $X$ have the Fredholm representation
\begin{equation}
\label{eq:X-fredholm} X_\t= \int_{[\mathbf{0},\mathbf{1}]} K(\t,\s)\, \d
W_\s.
\end{equation}
Then $\tilde X$ is equivalent to $X$ if it admits the representation
\begin{equation}
\label{eq:tildeX-equivalence} \tilde X_\t= \int_{[\mathbf{0},\mathbf{1}]} K(\t,\s)\,
\d\tilde W_\s,
\end{equation}
where $\tilde W$ is related to $W$ by \eqref{eq:hitsuda}. But
Eq.~\eqref{eq:equivalence} implies precisely this.
\end{proof}

\begin{rem}
On the kernel level, Eq.~\eqref{eq:equivalence} states that
\[
\tilde K(\t,\s) = K(\t,\s) - \int_{[\s,\mathbf{1}]} K(\t,\u) L(\u,\s)\,\d
\u
\]
for some Volterra kernel $L\in L^2([\mathbf{0},\mathbf{1}])$.
\end{rem}

\section{Application to series expansions}\label{sect:series}

The Mercer square root \eqref{eq:K-series} can be used to build the
Karhunen--Lo\`eve expansion for the Gaussian process $X$. But the
Mercer form \eqref{eq:K-series} is seldom known. However, if we can
somehow find \emph{any} kernel $K$ such that representation \eqref
{eq:fredholm} holds, then we can construct a series expansion for $X$
by using the Fredholm representation of Theorem~\ref{thm:fredholm} as follows.

\begin{pro}[Series expansion]\label{pro:series-expansion}
Let $X$ be a separable Gaussian process with representation \eqref
{eq:fredholm}, and let $(\phi_k)_{k=1}^\infty$ be \emph{any}
orthonormal basis on $L^2([\mathbf{0},\mathbf{1}])$. Then $X$ admits
the series expansion
\begin{equation}
\label{eq:series-expansion} X_\t= \sum_{k=1}^\infty
\int_{[\mathbf{0},\mathbf{1}]} \phi_k(\s )K(\t,\s)\, \d\s\cdot
\xi_k,
\end{equation}
where the $(\xi_k)_{k=1}^\infty$ is a sequence of independent
standard normal random variables. The series \eqref
{eq:series-expansion} converges in $L^2(\varOmega,\mathcal{F},\mathbb
{P})$ and also almost surely uniformly if and only if $X$ is continuous.
\end{pro}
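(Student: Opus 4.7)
The plan is to combine the Fredholm representation of $X$ with the Fourier expansion of the kernel in the given orthonormal basis. For each fixed $\t \in [\mathbf{0},\mathbf{1}]$ the slice $K(\t,\cdot)$ lies in $L^2([\mathbf{0},\mathbf{1}])$, so Parseval gives
\[
K(\t,\s) = \sum_{k=1}^\infty c_k(\t)\,\phi_k(\s), \qquad c_k(\t) := \int_{[\mathbf{0},\mathbf{1}]}\phi_k(\s)K(\t,\s)\,\d\s,
\]
with convergence in $L^2([\mathbf{0},\mathbf{1}])$ in the variable $\s$. The natural candidate for the random coefficients is $\xi_k := \int_{[\mathbf{0},\mathbf{1}]}\phi_k(\s)\,\d W_\s$, which by the Wiener--It\^o isometry \eqref{eq:isometry} and the orthonormality of $(\phi_k)$ form an i.i.d.\ standard normal sequence.

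For the $L^2(\varOmega,\mathcal{F},\mathbb{P})$ convergence at fixed $\t$, one applies \eqref{eq:isometry} directly: setting $K_N(\t,\s) := \sum_{k=1}^N c_k(\t)\phi_k(\s)$, we have
\[
\mathbb{E}\Bigl[\Bigl(X_\t - \sum_{k=1}^N c_k(\t)\xi_k\Bigr)^2\Bigr] = \int_{[\mathbf{0},\mathbf{1}]}\bigl(K(\t,\s)-K_N(\t,\s)\bigr)^2\,\d\s,
\]
and the right-hand side tends to $0$ by Parseval. This establishes \eqref{eq:series-expansion} in $L^2(\varOmega,\mathcal{F},\mathbb{P})$ for each $\t$.

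For the equivalence between almost sure uniform convergence of \eqref{eq:series-expansion} and continuity of $X$, the approach is to invoke the It\^o--Nisio theorem for sums of independent symmetric random elements in the separable Banach space $C([\mathbf{0},\mathbf{1}])$. If $X$ admits a continuous version, then $\t\mapsto K(\t,\cdot)$ is continuous in $L^2([\mathbf{0},\mathbf{1}])$ (equivalently the covariance $R$ is continuous on the diagonal), so each $c_k(\t) = \langle K(\t,\cdot),\phi_k\rangle_{L^2}$ is continuous in $\t$ and the partial sums $S_N$ live in $C([\mathbf{0},\mathbf{1}])$. Since the $S_N$ converge in distribution to $X$ in this Banach space, It\^o--Nisio upgrades the convergence to almost sure convergence in the sup-norm. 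The converse is immediate, since a uniform almost sure limit of continuous functions is continuous.

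The hard step is precisely this "if and only if" for almost sure uniform convergence: pointwise $L^2$ convergence does not a priori yield sup-norm convergence, and the Gaussian-series It\^o--Nisio machinery (or an equivalent Borell-type concentration argument) is needed to bridge the gap. The remaining ingredients reduce to bookkeeping with Parseval's identity and the Wiener--It\^o isometry.
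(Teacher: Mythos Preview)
Your argument is correct and lands on the same hard step as the paper, but the packaging differs. The paper's proof is entirely structural: it observes via Proposition~\ref{pro:RKHS} that $\mathrm{K}$ is an isometry from $L^2([\mathbf{0},\mathbf{1}])$ onto the RKHS $\mathcal{H}$ of $X$, so that the functions $\t\mapsto\int\phi_k(\s)K(\t,\s)\,\d\s$ form an orthonormal basis of $\mathcal{H}$, and then cites Adler's Theorems~3.7 and~3.8 to obtain both the $L^2$ expansion and the continuity equivalence in one stroke. You instead do the $L^2$ part by hand---constructing $\xi_k=\int\phi_k\,\d W$ and using Parseval together with the Wiener--It\^o isometry---which is more elementary and makes the role of the Fredholm kernel completely explicit; and for the uniform part you name the underlying tools (It\^o--Nisio plus Gaussian concentration) rather than citing Adler. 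The one soft spot in your write-up is the sentence ``the $S_N$ converge in distribution to $X$ in this Banach space'': pointwise $L^2$ convergence gives only finite-dimensional convergence, and getting tightness in $C([\mathbf{0},\mathbf{1}])$ is exactly where the Gaussian concentration enters---you acknowledge this in your final paragraph, but it would be cleaner to invoke the version of It\^o--Nisio that only requires $f(S_N)\to f(X)$ in probability for $f$ in the dual together with $X\in C$ a.s., or simply to cite the packaged result as the paper does.
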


The proof below uses reproducing kernel Hilbert space technique. For
more details on this, we refer to \cite{Gilsing-Sottinen-2003}, where
the series expansion is constructed for fractional Brownian motion by
using the transfer principle.

\begin{proof}
The Fredholm representation \eqref{eq:fredholm} implies immediately
that the reproducing kernel Hilbert space of $X$ is the image $\mathrm
{K} L^2([\mathbf{0},\mathbf{1}])$ and $\mathrm{K}$ is actually an
isometry from $L^2([\mathbf{0},\mathbf{1}])$ to the reproducing
kernel Hilbert space of $X$. Indeed, this is what Proposition~\ref
{pro:RKHS} states.

The $L^2$-expansion \eqref{eq:series-expansion} follows from this due
to \cite[Thm.~3.7]{Adler-1990} and the equivalence of almost sure
convergence of \eqref{eq:series-expansion}, and the continuity of $X$
follows from \cite[Thm.~3.8]{Adler-1990}.
\end{proof}


\end{document}